\theoremstyle{plain}
\newtheorem{thm}{Theorem}[section]
\newtheorem{prop}[thm]{Proposition}
\newtheorem{lemm}[thm]{Lemma}
\newtheorem{rema}[thm]{Remark}
\newtheorem{conj}[thm]{Conjecture}
\title{Enumeration of groups in some special varieties of $A$-groups}
\author{Arushi\footnote{\mbox{Dr. B. R. Ambedkar University Delhi, Delhi 110006, India;  E-mails: arushi.18@stu.aud.ac.in,}  arushi.garvita@gmail.com.}  \ and \  Geetha Venkataraman \footnote{Corresponding Author, Dr. B. R. Ambedkar University Delhi, Delhi 110006, India;  E-mails: geetha@aud.ac.in, geevenkat@gmail.com.}}
\begin{document}
\maketitle

\noindent
{\small{\bf ABSTRACT:}}
We find an upper bound for the number of groups of order $n$ up to isomorphism in the variety ${\mathfrak{S}}={\mathfrak{A}_p}{\mathfrak{A}_q}{\mathfrak{A}_r}$ where $p$, $q$ and $r$ are distinct primes. We also find a bound  on the orders and on the number of conjugacy classes of subgroups that are maximal amongst the subgroups of the general linear group that are also in the variety $\mathfrak{A}_q\mathfrak{A}_r$. 

\noindent
{\small{\bf Keywords}{:} }
group enumeration, variety of groups, general linear group, symmetric group, transitive subgroup, primitive subgroup, conjugacy class.

\noindent
{\small{\bf AMS Subject Classification}{:} }
20B15, 20B35, 20D10, 20E10, 20E45, 20H30. 

\vspace{.2in}
\noindent
{\bf THIS IS AN EARLY VERSION OF THE PAPER. FOR THE FINAL
VERSION SEE \url{https://doi.org/10.1017/S0004972724000431}.}
\vspace{.2in}

\baselineskip=\normalbaselineskip
\section{Introduction}
A group is an $A$-group if its nilpotent subgroups are abelian. Let ${\mathfrak{B}}$ be any class of groups then the number of groups of order $n$ up to isomorphism is denoted by $f_{\mathfrak{B}}(n)$. Computing $f(n)$ becomes harder as $n$ gets bigger. Thus in the area of group enumerations, we attempt to approximate $f(n)$. When counting is restricted to the class of abelian groups, $A$-groups, and groups in general, then $f(n)$ behaves differently asymptotically. Let $f_{A,sol}(n)$ be the number of isomorphism classes of soluble $A$-groups of order $n$. In \cite{GD1969} G.A.Dickenson showed that $f_{A,sol}(n) \leq n^{c \log{(n)}}$ for some constant $c$. In \cite{Annabel1987} McIver and Neumann showed that the number of non-isomorphic $A$-groups of order $n$ is at most $n^{{\lambda}+1}$ where $\lambda$ is the number of prime divisors of $n$ including multiplicities. In the same paper they stated the following conjecture based on a result of Higman \cite{GH1960} and Sims \cite{CS1965} on $p$-group enumerations.
\begin{conj} 
\label{conj_1} 
Let $f(n)$ be the number of (isomorphism classes of groups of) order $n$. Then\\
$$ f(n) \leq n^{(\frac{2}{27} + \epsilon){\lambda}^2}$$ 
where $\epsilon \to 0$ as $\lambda \to \infty$.
\end{conj}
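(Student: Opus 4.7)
The plan is to prove the conjecture by extracting the constant $2/27$ from the Higman--Sims theorem on $p$-group enumeration and showing that all remaining contributions are lower-order in $\lambda$. Write $n = p_1^{a_1}\cdots p_k^{a_k}$ with $\lambda = a_1 + \cdots + a_k$. The first reduction is to the case that $G$ is soluble: a non-soluble group of order $n$ has a characteristic soluble radical of bounded index, and the finitely many simple groups of each composition-factor order contribute only a factor of $n^{O(\lambda)}$, which is absorbable into $\epsilon\lambda^2$ once $\lambda$ is large.

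Assuming $G$ is soluble, fix a Sylow system $\{P_1,\ldots,P_k\}$. The Higman--Sims bound gives at most $\prod_i p_i^{(2/27+o(1))a_i^3}$ isomorphism types for the tuple $(P_1,\ldots,P_k)$, and since $\max_i a_i \leq \lambda$,
\[
\sum_i a_i^3 \log p_i \;=\; \sum_i a_i^2\,(a_i \log p_i) \;\leq\; \lambda^2 \sum_i a_i \log p_i \;=\; \lambda^2 \log n,
\]
so the Sylow-subgroup contribution already fits inside $n^{(2/27+o(1))\lambda^2}$. What remains is to reconstruct $G$ from the chief-factor structure of the Fitting subgroup $F(G)$, the action of $G/F(G)$ on those chief factors, and the cocycle classes governing the successive extensions.

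To bound this remaining data, I would use that each chief factor is elementary abelian and that $G/F(G)$ acts faithfully on it through a subgroup of $\mathrm{GL}(a,p)$ lying in a variety of $A$-groups. McIver and Neumann's bound gives at most $n^{\lambda+1}$ possibilities for the $A$-quotient $G/F(G)$ itself, while bounds on maximal subgroups of $\mathrm{GL}$ lying in prescribed varieties of $A$-groups --- of the kind developed in the body of the present paper for $\mathfrak{A}_q\mathfrak{A}_r$ --- give $n^{O(\lambda)}$ choices for each action. A similar $n^{O(\lambda)}$ suffices for the cocycle data, so in total the non-Sylow contribution is sub-quadratic in $\lambda$ in the exponent of $n$ and is absorbed into $\epsilon\lambda^2$.

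The main obstacle is the sharp constant $2/27$. Even the weaker bound $f(n) \leq n^{C\lambda^2}$ for some absolute $C$ is a deep theorem of Pyber requiring the classification of finite simple groups, and achieving precisely $2/27$ demands that every counting step outside the Sylow enumeration be genuinely sub-quadratic uniformly over the primes $p_i$. Controlling the cocycle and module-structure counts at that level of precision is the delicate piece, and is presumably the reason the statement remains only a conjecture; it is also why the paper under consideration restricts attention to the shorter variety $\mathfrak{A}_p\mathfrak{A}_q\mathfrak{A}_r$, where the Fitting length is at most three and the gluing data is explicit enough to handle directly.
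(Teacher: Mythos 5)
The statement you are trying to prove is labelled a \emph{conjecture} in the paper, and the paper offers no proof of it: it is the McIver--Neumann conjecture, recorded as motivation. The authors note only that Pyber's theorem (that the number of groups of order $n$ with specified Sylow subgroups is at most $n^{75\mu+16}$) combined with Higman--Sims yields $f(n)\leq n^{\frac{2}{27}\mu^{2}+O(\mu^{5/3})}$. So there is no ``paper's own proof'' to compare yours against, and your write-up should not be presented as a proof of the statement.

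As a proof attempt, your proposal has concrete gaps at exactly the points where the real difficulty lives. First, the ``reduction to the soluble case'' is not a routine step: controlling insoluble groups is where the classification of finite simple groups enters Pyber's argument, and the assertion that non-soluble contributions cost only $n^{O(\lambda)}$ is unsubstantiated. Second, and more seriously, the claim that the gluing data --- the actions on chief factors and the cohomology classes --- contributes only $n^{O(\lambda)}$ \emph{is} Pyber's theorem; you cannot assume it, since it is the entire content of the result you would need. Third, your appeal to McIver--Neumann's bound $n^{\lambda+1}$ for $G/F(G)$ is invalid: that bound applies to $A$-groups, and the quotient $G/F(G)$ of a general soluble group need not be an $A$-group; likewise $G/F(G)$ acts faithfully on $F(G)$ but not on each individual chief factor. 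Your final paragraph in effect concedes that the delicate counting has not been carried out, which means the proposal is a plan rather than a proof. The Higman--Sims step itself, including the inequality $\sum_i a_i^3\log p_i\leq\lambda^2\log n$, is fine, but it only bounds the Sylow data, which was never the obstruction.
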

 In 1993 L.Pyber \cite{LP1993} proved a powerful version of \ref{conj_1}. He proved that the number of groups of order $n$ with specified Sylow subgroups is at most $n^{75 \mu + 16}$ where $\mu$ is the largest integer such that $p^{\mu}$ divides $n$ for some prime $p$. Using Higman-Sims and Pyber's result we get that $ f(n) \leq n^{\frac{2}{27}{\mu}^2 + O(\mu^{5/3})}$. In \cite {GV1997}, it was shown that $f_{A,sol}(n) \leq n^{7\mu + 6}$.

The variety ${\mathfrak{A}_u}{\mathfrak{A}_v}$ consists of all groups $G$ with an abelian normal subgroup $N$ of exponent dividing $u$ such that $G/N$ is abelian of exponent dividing $v$. For more on varieties see \cite{HN1967}. Let $p, q$ and $r$ be distinct primes. In this paper we find a bound for $f_{\mathfrak{S}}(n)$ where $\mathfrak{S}={\mathfrak{A}_p}{\mathfrak{A}_q}{\mathfrak{A}_r}$ and $f_{\mathfrak{S}}(n)$ counts the groups in $\mathfrak{S}$ of order $n$ up to isomorphism. The idea behind studying the variety $\mathfrak{S}$ is that enumerating within the varieties of $A$-groups might yield a better upper bound for the enumeration function for $A$-groups. The `best' bounds for $A$-groups, or even solvable $A$-groups, are still lacking the correct leading term. It is believed that a correct leading term for the upper bound of $A$-groups would lead to the right error term for the enumeration of groups in general.

A few smaller varieties of $A$-groups have already been studied \cite[Chapter 18]{SPG2007}. The class of $A$-groups for which the `best' bounds exist was obtained by enumerating in such small variety of $A$-groups, but this did not narrow the difference between the upper and lower bounds for $f_{A,sol}(n)$. The analysed groups did not contribute a large enough collection of $A$-groups. Hence a good lower bound could not be reached. In order to reduce the difference, we enumerate in a larger variety of $A$-groups, namely $\mathfrak{S}$.

Throughout the paper, $p$, $q$, $r$ and $t$ are distinct primes. We assume that $s$ is a power of $t$. We take logarithms to the base $2$ unless stated otherwise and follow the convention that $0 \in \mathbb{N}$. We use $C_m$ to denote a cyclic group of order $m$ for any positive integer $m$. Let $O_{p'}{(G)}$ denote the largest normal $p'$-subgroup of $G$. The techniques we use are similar to those in \cite{LP1993}, \cite{SPG2007} and \cite{GV1997}. The main results proved in this paper are as follows. 

\medskip
\noindent
{\bf{Theorem A}} Let $n=p^{\alpha}q^{\beta}r^{\gamma}$ where $\alpha, \beta, \gamma \in \mathbb{ N}$. Then\\
$$f_{\mathfrak{S}}(n) \leq p^{6\alpha^{2}} \,2^{\alpha - 1 + (23/6) \alpha \log(\alpha) + \alpha \log(6)}\, (6^{1/2})^{{({\alpha + \gamma})\beta} + {({\alpha + \beta})\gamma} +{\alpha(\alpha-1)/2}}\, n^{\beta + \gamma}.$$

In order to prove Theorem A, we prove a bound on the number of conjugacy classes of subgroups that are maximal amongst subgroups of ${\rm GL}(\alpha, s)$ and which are in the variety $\mathfrak{A}_q \mathfrak{A}_r$ or $\mathfrak{A}_r$. We also prove results about the order of primitive subgroups of $S_n$ which are in the variety $\mathfrak{A}_q \mathfrak{A}_r$ and show that they form a single conjugacy class. These results are mentioned below.

\medskip
\noindent
{\bf{Theorem B}} 
 Let $q$ and $r$ be distinct primes. Let $G$ be a primitive subgroup of $S_n$ which is in $\mathfrak{A}_q \mathfrak{A}_r$ and let  $|G| = q^\beta r^\gamma$ where $\beta, \gamma \in \mathbb{N}$. Let $M$ be a minimal normal subgroup of $G$. 
\begin{enumerate}[(i)]
\item If $\beta = 0$, then $|M|$ is a power of $r$ and $|G| = n = r$ with $G \cong C_r$. 
\item If $\beta \geq 1$ then $|M| = q^{\beta} = n$ with $ \beta = \mbox{order } q \bmod r$. Further $G \cong M \rtimes C_r$ and $|G| = nr < n^{2}$.
\item If $\gamma = 0$, then $|M|$ is a power of $q$ and $|G| = n = q$ with $G\cong C_q$. 
\end{enumerate}

\medskip
\noindent
{\bf{Theorem C}} The primitive subgroups of $S_n$ which are in $\mathfrak{A}_q \mathfrak{A}_r$ and of order $q^{\beta}r^{\gamma}$ where $\beta, \gamma \in \mathbb{ N}$, form a single conjugacy class.

\medskip
\noindent
{\bf{Theorem D}}
There exist constants $b$ and $c$ such that the number of conjugacy classes of subgroups that are maximal amongst the subgroups of ${\rm GL}(\alpha, s)$ which are in $\mathfrak{A}_q\mathfrak{A}_r$ is at most  $$
2^{(b+c) ({\alpha}^2/\sqrt{\log (\alpha)}) + (5/6) \alpha \log (\alpha) + \alpha (1+ \log(6))} s^{(3+c){\alpha}^2}
$$
where $t, q$ and $r$ are distinct primes, $s$ is a power of $t$ and $\alpha > 1$.

\smallskip
Section \ref{PrimSymsub} investigates primitive subgroups of $S_{n}$ which are in $\mathfrak{A}_r$ or $\mathfrak{A}_q\mathfrak{A}_r$. Sections \ref{Genlinsub_r} and \ref{Genlinsub} deal with subgroups of the general linear group. Theorem A  is proved in section \ref{Enu_pqr}.

\section{Primitive subgroups of $S_{n}$ which are in $\mathfrak{A}_r$ or $\mathfrak{A}_q\mathfrak{A}_r$}
\label{PrimSymsub}
In this section we prove results which give us the structure of the primitive subgroups of $S_{n}$ which are in $\mathfrak{A}_r$ or $\mathfrak{A}_q\mathfrak{A}_r$. We also show that such subgroups form a single conjugacy class. Both Theorems B and C are proved in this section.

Theorem B provides the order of a primitive subgroup of $S_n$ which is in the variety $\mathfrak{A}_q\mathfrak{A}_r$. By \cite[Proposition 2.1]{GV1997}, we know that if $G$ is a soluble $A$-subgroup of $S_n$ then $|G| \leq (6^{1/2})^{n-1}$. Indeed, this bound is determined primarily by considering primitive soluble $A$-subgroups of $S_n$. This bound would clearly hold for any subgroup of $S_n$ which is in the variety $\mathfrak{A}_q\mathfrak{A}_r$. However, we show that when the subgroup is primitive and in the variety $\mathfrak{A}_q\mathfrak{A}_r$ we can do better. 
\begin{lemm}
\label{prim_sym_sub_r}
  $S_n$ has a primitive subgroup in $\mathfrak{A}_r$ if and only if $n=r$. In this case, any primitive subgroup $G$ which is in $\mathfrak{A}_r$ will be cyclic of order $r$. All primitive subgroups of $S_n$  which are in $\mathfrak{A}_r$ form a single conjugacy class.
\end{lemm}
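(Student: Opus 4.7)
The plan is to combine two very standard facts: (i) a group in $\mathfrak{A}_r$ is an elementary abelian $r$-group, and (ii) a transitive abelian permutation group acts regularly.

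First, I would let $G \leq S_n$ be primitive with $G \in \mathfrak{A}_r$. By definition of the variety, $G$ is an abelian group of exponent dividing $r$, hence elementary abelian of order $r^k$ for some $k \geq 0$ (with $k \geq 1$ since primitive groups are nontrivial). Primitivity implies transitivity, and since the point stabilizer of a transitive abelian permutation group is normal and hence (by transitivity) trivial, $G$ acts regularly. This gives $n = |G| = r^k$.

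Next I would force $k = 1$ using the blocks of the regular action. Under the regular action of $G$ on itself, the blocks containing the identity are exactly the subgroups of $G$. Primitivity therefore requires that $G$ have no proper nontrivial subgroups, which for an elementary abelian $r$-group forces $k = 1$. Thus $n = r$ and $G \cong C_r$. Conversely, when $n = r$, any subgroup of $S_r$ of order $r$ is generated by an $r$-cycle; it is transitive, and a transitive action on a prime number of points is automatically primitive (since the size of a block must divide $r$). So such subgroups exist precisely when $n = r$.

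Finally, for the conjugacy statement, I would observe that every primitive subgroup $G$ arising above is cyclic of order $r$ and generated by an $r$-cycle in $S_r$. Two such subgroups are conjugate in $S_r$ iff their generators are conjugate up to taking powers; but all $r$-cycles in $S_r$ are conjugate (they have the same cycle type), so all such subgroups lie in a single conjugacy class.

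The argument is essentially bookkeeping — the only step needing care is the block/subgroup correspondence for regular actions, and I would simply cite or briefly recall it rather than redevelop it. No serious obstacle is anticipated.
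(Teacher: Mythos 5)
Your proof is correct, and it takes a noticeably more elementary route than the paper's. The paper invokes the O'Nan--Scott theorem for soluble primitive groups: the minimal normal subgroup $M$ is elementary abelian and regular of order $n$, and since $G$ itself is elementary abelian (hence transitive abelian, hence regular) one gets $G=M$; the conclusion $n=r$ then comes from the fact that the point stabiliser must act irreducibly on $M$, and a trivial stabiliser can only act irreducibly on a one-dimensional $\mathbb{F}_r$-space. You replace this appeal to O'Nan--Scott by the block--subgroup correspondence for regular actions: the blocks through the identity are exactly the subgroups, so primitivity forces $G$ to have no proper nontrivial subgroups, i.e.\ $k=1$. Both arguments hinge on first establishing regularity of a transitive abelian group; yours buys self-containedness at the cost of recalling the block correspondence, while the paper's is shorter given that O'Nan--Scott is already cited for Theorem~B. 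Your treatment of the converse and of the conjugacy statement (all $r$-cycles are conjugate in $S_r$, hence the subgroups they generate are) coincides with the paper's. No gaps.
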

 
\begin{proof}
Let $G$ be a primitive subgroup of $S_n$ which is in $\mathfrak{A}_r$. Since $G$ is soluble therefore, $M$ is an elementary abelian $r$-subgroup. By the O'Nan-Scott Theorem \cite{LS1981}, we get that $|M| = n = |G|$. So $G = M \cong C_{r}$ and $n=r$. Conversely, any transitive subgroup $G$ of $S_{r}$ is primitive \cite[Theorem 8.3]{HW1964}. Since $n$ is prime, any subgroup of order $n$ in $S_n$ will be generated by a $n$-cycle. Further, any two $n$-cycles are conjugate in $S_n$. Thus the primitive subgroups of $S_n$ that are also in $\mathfrak{A}_r$ form a single conjugacy class.\end{proof}

\noindent {\bf{Proof of Theorem B}}
\begin{proof} Let $G$ be a subgroup of $S_{\Omega}$, where $|\Omega|=n$ and let $G \in \mathfrak{A}_q\mathfrak{A}_r$. Then $G = Q \rtimes R$ where $Q$ is an elementary abelian Sylow $q$-subgroup, $R$ is an elementary abelian Sylow $r$-subgroup and $|G| = q^\beta r^\gamma$ where $\beta$, $\gamma \in \mathbb{N}$. Let $M$ be a minimal normal subgroup of $G$. Then $M$ is an elementary abelian $u$-group. Clearly $|M|=u^{k}$ for some $k > 1$ and for some prime $u \in \{q,r\}$.
  
Now $F(G)$, the Fitting subgroup of $G$ is an abelian normal subgroup of $G$ and so, then by the O'Nan-Scott Theorem,  $n=|M|=|F(G)|$. But $M \leq F(G)$, therefore, $M=F(G)$ and $n=u^k$. If $\beta \geq 1$, then $Q \leq F(G)$ and we have $n= q^{\beta} = u^k$ and $M=F(G) = Q$. Let $H=G_{\alpha}$ be the stabiliser of an $\alpha \in \Omega$. By \cite[Proposition 6.13]{SPG2007}, we get that $G$ is a semidirect product of $M$ by $H$ and that $H$ acts faithfully by conjugation on $M$. By Maschke's theorem, $M$ is completely reducible. But $M$ is a minimal normal subgroup of $G$, therefore, $M$ is a non-trivial irreducible $\mathbb{F}_{q}H$-module and $H$ is an abelian group acting faithfully on $M$. So by \cite[Cor. 4.1]{GV1999},  we get $H\cong C_{r}$ and $ \beta = \dim M = \mbox{order } q \bmod r$ and the result follows. If $\gamma = 0$ or $\beta = 0$ then $|G|$ is a power of $u$ where $u \in \{q, r\}$. Thus $G$ is a primitive subgroup which is also in $\mathfrak{A}_u$. So the result follows by Lemma \ref{prim_sym_sub_r}. \end{proof}

It is clear from the above results that if $S_n$ has a primitive subgroup $G$ of order $q^{\beta}r^{\gamma}$ in $\mathfrak{A}_q\mathfrak{A}_r$ then $n$ has to be $r$ or $q$ and $G$ is cyclic with $|G|=n$ or we must have that $n=q^{\beta}$ and $G$ is a semi-direct product of an elementary abelian $q$-group of order $q^{\beta}$ by a cyclic group of order $r$. The limits imposed on $n$ and on the structure of such primitive subgroups gives us the next result.

\medskip
\noindent {\bf{Proof of Theorem C}}

\begin{proof}Let $G$ be a primitive subgroup of $S_{\Omega}$ which is in $\mathfrak{A}_q\mathfrak{A}_r$, where $|\Omega| = n$ and let $|G|= q^{\beta}r^{\gamma}$. Let $M$ be a minimal normal subgroup of $G$. As seen in the proof of Theorem B we get that $M = F(G)$ and $n=|M|$, is either a power of $q$ or $r$. If $\gamma = 0$ or $\beta = 0$ then $|G|$ is a power of $u$ where $u \in \{q, r\}$. Thus $G$ is a primitive subgroup which is also in $\mathfrak{A}_u$. So the result follows by Lemma \ref{prim_sym_sub_r}. 

We know the structure of $G$ when $\beta \geq 1$ from the proof of Theorem B. Hence $H$ can be regarded as a soluble $r$-subgroup of ${\rm GL}(\beta,q)$ and it is not difficult to show that the conjugacy class of $G$ in $S_{n}$ is determined by the conjugacy class of $H$ in ${\rm GL}(\beta,q)$. Let $S$ be a Singer subgroup of ${\rm GL}(\beta,q)$. So  $|S| = q^{\beta}-1$. Now $|H| =r$ and $r$ divides $|S|$. Further $\gcd (|{\rm GL}(\beta,q)|/|S|, r)=1$ as $\beta$ is the least positive integer such that $r \mid q^{\beta}-1$. Using \cite[Theorem 2.11]{MH1970}, we get that $H^{x} \leq S$ for some $x \in {\rm GL}(\beta,q)$. Since all Singer subgroups are conjugate in ${\rm GL}(\beta,q)$ the result follows.\end{proof}

\section{Subgroups of ${\rm GL}(\alpha, s)$ which are in $\mathfrak{A}_r$}
\label{Genlinsub_r}
In this section we prove results which give us a bound on the number of conjugacy classes of the subgroups that are maximal amongst subgroups of ${\rm GL}(\alpha, s)$ that are in $\mathfrak{A}_r$. The limits on the structure of such groups ensures that if they exist, they form a single conjugacy class. 
\begin{lemm}
\label{GL_conj_irr_sub_r}
The number of conjugacy classes of irreducible subgroups of ${\rm GL}(\alpha, s)$ which are also in $\mathfrak{A}_r$ is at most $1$.
\end{lemm}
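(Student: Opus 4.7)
My plan is to show that any such irreducible subgroup $H$ must be cyclic of order $r$ lying inside a Singer cycle of $\mathrm{GL}(\alpha,s)$, and then invoke the fact that Singer cycles form a single conjugacy class. Together these give that any two such $H$ are conjugate.

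To set up, I would let $V = \mathbb{F}_s^\alpha$ and take $H \le \mathrm{GL}(V)$ irreducible with $H \in \mathfrak{A}_r$, so that $H$ is elementary abelian of exponent dividing $r$. Since $H$ is abelian and acts faithfully and irreducibly on $V$, the same result from \cite{GV1999} used in the proof of Theorem B applies (or, equivalently, Schur's lemma together with Wedderburn's little theorem): the enveloping algebra $\mathbb{F}_s[H]$ is a finite field, and irreducibility forces $V$ to be one-dimensional over it, so $\mathbb{F}_s[H] \cong \mathbb{F}_{s^\alpha}$ and the centraliser of $H$ in $\mathrm{GL}(V)$ is a Singer cycle $S \cong \mathbb{F}_{s^\alpha}^*$ containing $H$.

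Next, since $S$ is cyclic of order $s^\alpha - 1$, its only elementary abelian $r$-subgroups are the trivial group and (when $r \mid s^\alpha - 1$) the unique subgroup of order $r$. Irreducibility together with $\alpha > 1$ rules out $H = 1$, so $|H| = r$, and $H$ is completely determined by $S$. Finally I would quote the classical fact that all Singer cycles of $\mathrm{GL}(\alpha,s)$ are conjugate; any element of $\mathrm{GL}(\alpha,s)$ conjugating one Singer cycle to another must send the unique order-$r$ subgroup to the unique order-$r$ subgroup, so any two irreducible $\mathfrak{A}_r$-subgroups of $\mathrm{GL}(\alpha,s)$ are conjugate.

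The main obstacle is the first step, namely identifying $\mathbb{F}_s[H]$ with $\mathbb{F}_{s^\alpha}$ (equivalently, identifying the centraliser as a Singer cycle). Once this structural fact is in hand, the remainder is bookkeeping inside a cyclic group together with the well-known conjugacy of Singer cycles, and one could alternatively cite \cite[Theorem 2.11]{MH1970} on conjugacy of Hall subgroups to bundle the last two steps together.
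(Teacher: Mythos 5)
Your argument is correct and follows essentially the same route as the paper: both reduce the problem to showing that such a subgroup is cyclic of order $r$ acting irreducibly, and then that all irreducible cyclic subgroups of order $r$ form a single conjugacy class. The only difference is one of packaging — where you unpack the structure explicitly (enveloping field $\mathbb{F}_{s^\alpha}$, the centraliser as a Singer cycle, conjugacy of Singer cycles), the paper cites \cite[Lemma 4.2]{GV1999} for cyclicity and \cite[Theorem 2.3.3]{MWS1992} for the conjugacy statement.
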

\begin{proof}
Let $G$ be a non-trivial irreducible subgroup of ${\rm GL}(\alpha, s)$ which is also in $\mathfrak{A}_r$. Then $G$ is an elementary abelian $r$-group of order $r^{\gamma}$, say, where $\gamma \in \mathbb{N}$. Since $G$ is a faithful abelian irreducible subgroup of ${\rm GL}(\alpha, s)$ whose order is coprime to $s$ we know that $G$ is cyclic (\cite[Lemma 4.2]{GV1999}). Thus $|G|=r$ and $\alpha = d$, where $d= \mbox{order } s \bmod r$. Using \cite[Theorem 2.3.3]{MWS1992}, we get that the irreducible cyclic subgroups of order $r$ in ${\rm GL}(\alpha, s)$ lie in a single conjugacy class. \end{proof}

\begin{prop}
\label{GL_conj_sub_r}
The number of conjugacy classes of subgroups that are maximal amongst subgroups of ${\rm GL}(\alpha, s)$ which are also in $\mathfrak{A}_r$ is at most $1$.
\end{prop}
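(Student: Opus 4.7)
The plan is to reduce to Lemma \ref{GL_conj_irr_sub_r} by decomposing the natural module of a maximal $\mathfrak{A}_r$-subgroup into irreducibles and using maximality to pin down the type of that decomposition. Let $G \leq {\rm GL}(\alpha, s)$ be maximal amongst subgroups in $\mathfrak{A}_r$, so $G$ is an elementary abelian $r$-group of order coprime to $s$. Maschke's theorem decomposes $V = \mathbb{F}_s^\alpha$ as $V = V_0 \oplus V_1 \oplus \cdots \oplus V_k$ into irreducible $\mathbb{F}_s G$-modules, with $V_0$ the sum of the trivial summands (equivalently, the fixed subspace of $G$). For each $i \geq 1$ the image of $G$ in ${\rm GL}(V_i)$ is a non-trivial irreducible subgroup lying in $\mathfrak{A}_r$, so Lemma \ref{GL_conj_irr_sub_r} forces it to be cyclic of order $r$ with $\dim V_i = d := \mbox{order } s \bmod r$.

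Next I would use maximality to show $\dim V_0 < d$. If $V_0$ contained a $d$-dimensional subspace $W$, then since $d = \mbox{order } s \bmod r$ a Singer-type construction in ${\rm GL}(d, s)$ produces a cyclic subgroup $H \cong C_r$ of ${\rm GL}(W)$ acting irreducibly. Extending $H$ by the identity on a chosen complement of $W$ in $V$ yields a subgroup that centralises $G$ (because $G$ acts trivially on $V_0 \supseteq W$ and $H$ acts trivially on the complement) and meets $G$ trivially (because $H$ acts non-trivially on $W$ while $G$ does not). Hence $\langle G, H\rangle = G \times H$ is an elementary abelian $r$-subgroup strictly containing $G$, contradicting maximality. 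This pins down $\dim V_0 = \alpha - kd$ with $k = \lfloor \alpha/d \rfloor$, so the $\mathbb{F}_s G$-module structure of $V$ is completely determined.

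For conjugacy-class uniqueness, given two such maximal subgroups $G, G' \leq {\rm GL}(\alpha, s)$, I would first conjugate $G'$ to match the module decomposition of $G$, using that ${\rm GL}(\alpha, s)$ is transitive on ordered direct sum decompositions of $V$ with a prescribed dimension sequence. On each block $V_i$ with $i \geq 1$ the induced cyclic subgroups of order $r$ of ${\rm GL}(V_i)$ are then ${\rm GL}(V_i)$-conjugate by Lemma \ref{GL_conj_irr_sub_r}, and a block-diagonal conjugation completes the identification. The main obstacle I anticipate is the enlarging step above: one must verify carefully that the auxiliary $H$ both centralises $G$ and meets it trivially so that $\langle G, H\rangle$ really lies in $\mathfrak{A}_r$ and is strictly larger than $G$. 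The edge cases $\alpha < d$ (where ${\rm GL}(\alpha, s)$ has no element of order $r$ and $G = 1$) and $d = 1$ (where the unique maximal $\mathfrak{A}_r$-subgroup is conjugate to the full diagonal $(C_r)^\alpha$) are absorbed by the same argument without separate treatment.
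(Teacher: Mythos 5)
Your proof is correct and follows the same basic route as the paper's: decompose $V$ by Maschke's theorem, apply Lemma \ref{GL_conj_irr_sub_r} to each non-trivial irreducible constituent to force it to be $C_r$ acting on a $d$-dimensional block with $d = \mbox{order } s \bmod r$, and then glue the blockwise conjugacies. The genuine difference is your treatment of the trivial summand $V_0$: the paper tacitly takes every constituent to be non-trivial, concludes $\alpha = dk$, and asserts that when $d \nmid \alpha$ there are no elementary abelian $r$-subgroups at all --- which is not forced, since $C_r$ can act irreducibly on a $d$-dimensional block and trivially on a complement whenever $\alpha \geq d$. Your enlargement argument (if $\dim V_0 \geq d$, adjoin a Singer-type $C_r$ on a $d$-dimensional subspace of $V_0$ to get a strictly larger elementary abelian $r$-group, contradicting maximality) correctly pins down $\dim V_0 = \alpha - kd$ with $k = \lfloor \alpha/d \rfloor$, so your version actually covers cases the paper's argument skips while reaching the same conclusion. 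The one step you should make explicit before the final block-diagonal conjugation: maximality also forces $G$ to equal the \emph{full} direct product of its blockwise images $G_1 \times \cdots \times G_k$ (rather than a proper subdirect product thereof), since that full product is again an elementary abelian $r$-group containing $G$; without this, matching the images block by block would not identify $G$ with $G'$. The paper states this point explicitly, and with it added your argument is complete.
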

\begin{proof}
Let $G$ be maximal amongst subgroups of ${\rm GL}(\alpha, s)$ which are also in $\mathfrak{A}_r$.   As $\mbox{char}(\mathbb{F}_{p}) = t \nmid |G|$, therefore, by Maschke's theorem we can find groups $G_i$ such that $G \leq G_{1} \times G_{2} \times \cdots \times G_{k} = \hat{G} \leq {\rm GL}(\alpha, s)$ where for each $i$, we have that $G_i$ is a (maximal) irreducible subgroup of ${\rm GL}(\alpha_i, s)$ that is also in $\mathfrak{A}_r$. Further $\alpha = \alpha_1 + \cdots + \alpha_k$. Clearly, $G_i \cong C_r$ and that $\alpha_i = d = \mbox{order } s \bmod r$ for each $i$.  Thus we must have $\alpha = dk$ and by maximality of $G$ we get that $G = \hat{G}$. Further the conjugacy classes of $G_i$ in ${\rm GL}(\alpha_i, s)$ determine the conjugacy class of $G$ in ${\rm GL}(\alpha, s)$.

So if $d$ does not divide $\alpha$ then ${\rm GL}(\alpha, s)$ cannot have any elementary abelian $r$-subgroup. If $d \mid \alpha$ then any $G$ that is maximal amongst subgroups of ${\rm GL}(\alpha, s)$ which are also in $\mathfrak{A}_r$ must have order $r^{k}$ where $k = \alpha/d$. Then by Lemma \ref{GL_conj_irr_sub_r} clearly, all such groups form a single conjugacy class. \end{proof}

\section{Subgroups of ${\rm GL}(\alpha, s)$ which are also in $\mathfrak{A}_q\mathfrak{A}_r$}
\label{Genlinsub}
We prove results which give us a bound on the order of subgroups of ${\rm GL}(\alpha, s)$ which are in $\mathfrak{A}_q\mathfrak{A}_r$ and also a bound for the number of conjugacy classes of subgroups that are maximal amongst subgroups of ${\rm GL}(\alpha, s)$ which are in $\mathfrak{A}_q\mathfrak{A}_r$. Theorem D is proved here.
\begin{prop}
\label{GL_ord_sub_AqAr}
 Let $G$ be a subgroup of ${\rm GL}(\alpha, s)$ which is in $\mathfrak{A}_q\mathfrak{A}_r$.  
 \begin{enumerate}[\rm{(}i\rm{)}]
     \item Let $m = |F(G)|$. If $G$ is primitive then $|G| \leq cm$  where $c = \text{order $s \bmod m$}$ and $c \mid \alpha$. Further $m$ is either $r$ or $q$ or $qr$.
     \item $|G| \leq {(6^{1/2})}^{\alpha - 1} \, {d}^{\alpha}$ where $d = \mbox{min}\{  qr, s \}$.
 \end{enumerate}  
\end{prop}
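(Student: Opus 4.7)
For part (i), the plan is to exploit the classical fact that in a primitive solvable linear group every abelian normal subgroup acts by scalars. Writing $G = Q \rtimes R$ with $Q$ the elementary abelian normal Sylow $q$-subgroup and $R$ an elementary abelian Sylow $r$-subgroup, the Fitting subgroup $F = F(G)$ equals $Q \times O_r(G)$ and is therefore abelian. By Clifford's theorem, $V = \mathbb{F}_s^\alpha$ decomposes as a direct sum of $F$-isotypic components permuted by $G$, and primitivity forces a single component, so $F$ acts on $V$ by scalars from some multiplicative subgroup of $\mathbb{F}_{s^c}^\times$. This makes $F$ cyclic of order $m = |F|$, with $c$ equal to the order of $s$ modulo $m$; the induced $\mathbb{F}_{s^c}$-vector space structure on $V$ forces $c \mid \alpha$. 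Cyclicity of $F$, combined with $Q$ and $O_r(G)$ being elementary abelian, forces $|Q| \in \{1,q\}$ and $|O_r(G)| \in \{1,r\}$; the case $m = 1$ is ruled out since $C_G(F) \leq F$ in the solvable $G$ would give $G = 1$. Hence $m \in \{q, r, qr\}$. For the bound $|G| \leq cm$, I would use $C_G(F) = F$ and observe that $N_{{\rm GL}(V)}(F)/C_{{\rm GL}(V)}(F)$ is cyclic of order $c$ (generated by the Frobenius $x \mapsto x^s$ on the scalar ring $\mathbb{F}_{s^c}$), which forces $|G/F| \leq c$.

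For part (ii), I would argue by induction on $\alpha$. The base case $\alpha = 1$ is immediate, since $G \leq \mathbb{F}_s^\times$ in $\mathfrak{A}_q\mathfrak{A}_r$ gives $|G| \leq \min\{qr, s-1\} \leq d$. For the inductive step I split into three cases. If $G$ is reducible, Maschke's theorem (applicable as $\gcd(|G|,s)=1$) produces $G \leq G_1 \times G_2$ with $G_i \leq {\rm GL}(\alpha_i, s)$ in $\mathfrak{A}_q\mathfrak{A}_r$ and $\alpha_1 + \alpha_2 = \alpha$; multiplying the inductive bounds gives $|G| \leq (6^{1/2})^{\alpha - 2} d^\alpha$. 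If $G$ is irreducible but imprimitive, pick an imprimitivity decomposition $V = V_1 \oplus \cdots \oplus V_k$ with $k > 1$ and each $\dim V_i = \alpha/k$, let $K$ be the kernel of the action of $G$ on the set of blocks, and note that $K$ embeds in $K_1 \times \cdots \times K_k$ with each $K_i \leq {\rm GL}(\alpha/k, s)$ a quotient of $K$, hence in $\mathfrak{A}_q\mathfrak{A}_r$; meanwhile $G/K$ is a transitive subgroup of $S_k$ in $\mathfrak{A}_q\mathfrak{A}_r$ (in particular a soluble $A$-subgroup), and so has order at most $(6^{1/2})^{k-1}$ by \cite[Proposition 2.1]{GV1997}. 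Multiplying the two inductive contributions gives exactly $(6^{1/2})^{\alpha - 1} d^\alpha$. The primitive case reduces to (i).

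The main obstacle is closing the primitive subcase of (ii), i.e.\ verifying $cm \leq (6^{1/2})^{\alpha - 1} d^\alpha$ using only $m \in \{q, r, qr\}$ and $c \mid \alpha$. I would split on the value of $d$. When $d = qr \leq s$, the crude estimate $cm \leq \alpha \cdot qr$ reduces the target to $\alpha \leq (6^{1/2})^{\alpha - 1} (qr)^{\alpha - 1}$, which follows immediately from $qr \geq 6$. When $d = s < qr$, the sharper bound $m \leq s^c - 1$ (since $F$ embeds in $\mathbb{F}_{s^c}^\times$) combined with $c \leq \alpha$ gives $cm < \alpha s^\alpha = \alpha d^\alpha$, and $\alpha \leq (6^{1/2})^{\alpha - 1}$ holds for every $\alpha \geq 1$. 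These arithmetic verifications, together with the structural parts (i) and the reducible/imprimitive cases above, complete the induction.
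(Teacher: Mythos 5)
Your proposal is correct and follows essentially the same route as the paper: for the primitive case, Clifford's theorem plus the observation that the enveloping algebra of the abelian Fitting subgroup is a field $\mathbb{F}_{s^c}$ (forcing $F$ cyclic, $m\in\{q,r,qr\}$, $c\mid\alpha$ and $G/F$ embedding in a Galois group of order $c$), and for part (ii) Maschke's theorem, the imprimitivity decomposition, and the bound $(6^{1/2})^{k-1}$ on transitive soluble $A$-subgroups of $S_k$ from \cite[Proposition 2.1]{GV1997}. The only differences are organizational: you run part (ii) as an explicit induction on $\alpha$ and check the inequality $cm\leq(6^{1/2})^{\alpha-1}d^{\alpha}$ directly in the primitive case, whereas the paper reduces an irreducible imprimitive group to a wreath product with primitive base and absorbs the factor $c'$ via $c'\leq(6^{1/2})^{c'-1}$ and $(m')^{k}\leq d^{\alpha}$.
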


\begin{proof}
Let $V = (\mathbb{F}_{s})^{\alpha}$. Let $G$ be a primitive subgroup of ${\rm GL}(\alpha, s)$ which is in $\mathfrak{A}_q\mathfrak{A}_r$ and let $|G| = q^{\beta} r^{\gamma}$ where $\beta$ and $\gamma$ are natural numbers. If $\beta = 0$ or $\gamma = 0$, then we get the required result by Lemma \ref{GL_conj_irr_sub_r}. Assume that $\beta$ and $\gamma$ are at least 1. Let $F = F(G)$ be the Fitting subgroup of $G$. Since $G \in \mathfrak{A}_q\mathfrak{A}_r$ we have that $F$ is abelian and $|F|=q^{\beta}r^{\gamma_1} =m$ where $\gamma_1 \leq \gamma$.  By Clifford's theorem, since $G$ is primitive we get that as an $F$-module, $V = X_{1} \oplus X_{2} \oplus \cdots \oplus X_{a}$ where $X_{i}$ are conjugates of $X$, an irreducible $\mathbb{F}_{s}F$-submodule of $V$. Note that $F$ acts faithfully on $X$.

Let $E$ be the subalgebra generated by $F$ in $\mbox{End}(V)$. Since, the $X_{i}$ are conjugates of $X$, therefore $E$ acts faithfully and irreducibly on $X$ and $E$ is commutative. So by \cite[Proposition 8.2 and Theorem 8.3]{SPG2007} we get that $E$ is a field. Thus $E \cong \mathbb{F}_{s^{c}}$, where $c = \dim(X)$ as a $\mathbb{F}_{s}F$-module and $\alpha = ac$. Note that $F$ is an abelian group of order $m$ acting faithfully and irreducibly on $X$. Consequently, $F$ is cyclic and so $c$ is the least positive integer such that $m | s^{c} -1 $. Clearly $m=q$ or $m=qr$ and so $\beta =1$.  It is not difficult to show that $G$ acts on $E$ by conjugation. Hence, there exists a homomorphism from $G$ to $\mbox{Gal}_{\mathbb{F}_{s}}(E)$. Let  $N$ be the kernel of this map. Then $N = C_{G}(E) \leq C_{G}(F) \leq F$. But $F \leq N$. Hence, $F=N$. So, $\frac{G}{F} \leq \mbox{Gal}_{\mathbb{F}_{s}}(E) \cong C_{c}$ and $|G| \leq cm$. 

Let $G$ be an irreducible imprimitive subgroup of ${\rm GL}(\alpha, s)$ which is also in $\mathfrak{A}_q\mathfrak{A}_r$. Then we get that $G \leq G_{1}\, {\rm wr} \,G_{2} \leq {\rm GL}(\alpha, s)$ where $G_{1}$ is a primitive subgroup of ${\rm GL}(\alpha_1, s)$ which is in $\mathfrak{A}_q\mathfrak{A}_r$, and the group $G_{2}$  can be regarded as a transitive subgroup of $S_{k}$ which is in $\mathfrak{A}_q\mathfrak{A}_r$. Further $\alpha = \alpha_{1} k$. By the above part, $|G_1| \leq c'm'$ where $c' = \text{order $s \bmod m'$}$ and $m' = |F(G_1)|$ is either $r$ or $q$  or  $qr$. Also $c' \mid \alpha_1$. By \cite[Proposition 2.1]{GV1997} we have that $|G_2| \leq {(6^{1/2})}^{k-1}$. Using $c' \leq 2^{c'-1} \leq {(6^{1/2})}^{c' - 1}$ we get that $|G| \leq {(6^{1/2})}^{\alpha - 1} \, {(m')}^{k}$. Since $m' \mid p^{c'} -1$ we get that ${(m')}^k \leq d^{\alpha}$ where $d = \mbox{min}\{  qr, s \}$.

Since $t$ does not divide $q$ or $r$, by Maschke's Theorem, any subgroup $G$ of ${\rm GL}(\alpha, s)$ which is in $\mathfrak{A}_q\mathfrak{A}_r$ will be completely reducible. Thus $G \leq G_{1} \times \cdots \times G_{k} \leq {\rm GL}(\alpha, s)$, where $G_{i}$'s are irreducible subgroups of ${\rm GL}(\alpha_i, s)$ which are in $\mathfrak{A}_q\mathfrak{A}_r$ and $\alpha = \alpha_{1} + \cdots + \alpha_{k}$. Hence, we get $|G| \leq {(6^{1/2})}^{\alpha - 1} \, {d}^{\alpha}$ where $d = \mbox{min}\{  qr, s \}$.\end{proof}

\begin{prop}
\label{GL_conj_sub_AqAr}
There exists constants $b$ and $c$ such that the number of conjugacy classes of subgroups that are maximal amongst irreducible subgroups of ${\rm GL}(\alpha, s)$ which are in $\mathfrak{A}_q\mathfrak{A}_r$ is at most $ 2^{(b+c) ({\alpha}^2/\sqrt{\log (\alpha)}) + (5/6) \log (\alpha) + \log(6)} \, s^{(3+c){\alpha}^2}$ provided $\alpha > 1$.
\end{prop}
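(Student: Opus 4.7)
The plan is to split any maximal irreducible subgroup $G$ of ${\rm GL}(\alpha, s)$ lying in $\mathfrak{A}_q\mathfrak{A}_r$ into the primitive and imprimitive cases furnished by Proposition \ref{GL_ord_sub_AqAr}, bound the number of conjugacy classes in each, and add the two contributions. The primitive case is easy and absorbs into the lower-order factors: by Proposition \ref{GL_ord_sub_AqAr}(i), $F(G)$ is cyclic of order $m \in \{q, r, qr\}$, $G/F(G)$ is cyclic of order dividing $c = \mbox{order } s \bmod m$, and $c \mid \alpha$. Arguing as in the proof of Theorem C, the ${\rm GL}(\alpha, s)$-class of such a $G$ is controlled up to finitely many choices by the class of its cyclic Fitting subgroup, and the irreducible cyclic subgroups of each admissible order of ${\rm GL}(\alpha, s)$ lie in a single conjugacy class by \cite[Theorem 2.3.3]{MWS1992}. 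So the primitive contribution is a constant.

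For the imprimitive case, the proof of Proposition \ref{GL_ord_sub_AqAr}(ii) realises any such $G$ as a subgroup of a wreath product $H = G_1 \wr G_2 \leq {\rm GL}(\alpha_1 k, s)$ with $\alpha = \alpha_1 k$, where $G_1$ is primitive in ${\rm GL}(\alpha_1, s)$ and in $\mathfrak{A}_q\mathfrak{A}_r$ and $G_2$ is a transitive subgroup of $S_k$ in $\mathfrak{A}_q\mathfrak{A}_r$. I would enumerate the conjugacy classes of $G$ by first summing over the factorisations $\alpha = \alpha_1 k$, contributing at most a $\tau(\alpha)$ factor; next over the ${\rm GL}(\alpha_1, s)$-class of $G_1$, which by the primitive case is a bounded constant; next over the $S_k$-class of $G_2$, where a Pyber-style count of transitive subgroups of $S_k$ contributes $2^{b k^2/\sqrt{\log k}} \leq 2^{b\alpha^2/\sqrt{\log \alpha}}$ for an absolute constant $b$; and finally over the $H$-conjugacy classes of $G$ sitting inside $H$. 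For this last step Proposition \ref{GL_ord_sub_AqAr}(ii) gives $|H| \leq (6^{1/2})^{\alpha - 1}(qr)^{\alpha} \leq s^{O(\alpha)}$, so the generic estimate that a finite group of order $n$ has at most $n^{\log n}$ subgroups produces a contribution of $s^{c\alpha^2}$ for an absolute $c$. Multiplying these factors together, and absorbing $\tau(\alpha)$ and other small terms into the expression $2^{(5/6)\log \alpha + \log 6}$, yields the stated bound.

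The main obstacle is the final step: one needs to count $H$-conjugacy classes of subgroups of $H$ without paying much more than $s^{c\alpha^2}$, and the crude $n^{\log n}$ subgroup count is almost forced here because the numerology of the target bound is tight at $(3+c)\alpha^2$; a sharper approach would use a Gaschütz-type enumeration of complements via the semidirect decomposition $H = G_1^k \rtimes G_2$, combined with control of $H^1(G_2, G_1^k)$. A secondary subtlety is that the transitive-subgroup bound applied to $G_2 \leq S_k$ must be used in the refined $k^2/\sqrt{\log k}$ form, since a cruder bound would spoil the leading $2^{(b+c)\alpha^2/\sqrt{\log \alpha}}$ term; one relevant feature that helps here is that $G_2$ is in particular soluble of derived length at most $2$, so Pyber's sharper estimates are applicable.
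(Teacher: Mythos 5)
Your route is genuinely different from the paper's: you split into primitive and imprimitive cases and push everything through the wreath-product embedding $G \leq H = G_1 \wr G_2$, whereas the paper works directly with the Clifford decomposition of $V$ into the $l$ homogeneous components $Y_i$ under $F = F(G)$ and counts along the filtration $F \leq N \leq G$, where $N$ is the kernel of the permutation action on the $Y_i$. Unfortunately the step you yourself flag as the main obstacle is a genuine gap, and it is fatal as written. The generic estimate that a group of order $n$ has at most $n^{\log n}$ subgroups, applied to $H$ with $|H| \leq s^{C\alpha}$, gives $|H|^{\log |H|} = 2^{(\log |H|)^2} \leq 2^{C^2 \alpha^2 (\log s)^2} = s^{C^2 \alpha^2 \log s}$, \emph{not} $s^{c\alpha^2}$: the exponent carries an extra factor of $\log s$, and since $s$ is unbounded this can never be absorbed into $s^{(3+c)\alpha^2}$ for an absolute constant $c$. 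Your suggested repair via a Gasch\"utz-type count of complements and $H^1(G_2, G_1^k)$ does not apply either, because the subgroups $G$ to be counted are in general not complements to the base group: $G \cap G_1^{\,k}$ is typically a proper nontrivial subgroup of $G_1^{\,k}$, so you are counting arbitrary subgroups of $H$ with transitive image in $G_2$, not complements.

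The idea your proposal is missing --- and the engine of the paper's proof --- is to replace the $n^{\log n}$ count by a generator count in which the number of generators is $O(\alpha)$ or $O(\alpha/\sqrt{\log \alpha})$, independent of $s$. The paper shows that $F$ lies in a direct product $E_1^{*} \times \cdots \times E_l^{*}$ of $l$ cyclic groups and is therefore $l$-generated; that $N/F$ embeds in a product of $l$ cyclic Galois groups and is $l$-generated; and that $G/N$ is a transitive subgroup of $S_l$, hence $\lfloor cl/\sqrt{\log l}\rfloor$-generated by Lucchini--Menegazzo--Morigi. At each stage the number of choices is then bounded by (order of the ambient group) raised to the number of generators, which is a power of $s$ with exponent $O(\alpha^2)$. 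If you wish to keep the wreath-product framing, the analogous observation you would need is that $G \cap G_1^{\,k}$ sits in a direct product of $k$ metacyclic groups and is therefore $2k$-generated, after which $G$ can be counted by choosing generators rather than by the crude subgroup bound. Two smaller points: Lucchini's bound $2^{bk^2/\sqrt{\log k}}$ on the number of transitive subgroups of $S_k$ holds for all transitive subgroups, so no appeal to solubility of $G_2$ is needed; and your primitive case is fine and indeed contributes only boundedly many classes.
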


\begin{proof}
Let $G$ be a subgroup of ${\rm GL}(\alpha, s)$ such that it is maximal amongst irreducible subgroups of ${\rm GL}(\alpha, s)$ which are in $\mathfrak{A}_q\mathfrak{A}_r$. Let $G = q^{\beta} r^{\gamma}$ where $\beta$ and $\gamma$ are natural numbers. If $\beta = 0$ or $\gamma = 0$, then we get the required result by Lemma \ref{GL_conj_irr_sub_r}. Assume that $\beta$ and $\gamma$ are at least 1. Let $V = (\mathbb{F}_{s})^{\alpha}$ and $F = F(G)$, the Fitting subgroup of $G$. Then $F=Q \times R_1$ where $Q$ is the unique Sylow $q$-subgroup of $G$ and $R_1 \leq R$, where $R$ is a Sylow $r$-subgroup of $G$. So $F$ is abelian and $|F|=q^{\beta}r^{\gamma_1} =m$ where $\gamma_1 \leq \gamma$. 

Using Clifford's theorem, regarding $V$ as $\mathbb{F}_{s}F$-module, we get that $V = Y_{1} \oplus Y_{2} \oplus \cdots \oplus Y_{l}$ where $Y_{i} = k X_{i}$ for all $i$, and $X_{1}, \ldots, X_{l}$ are irreducible $\mathbb{F}_{s}F$-submodules of $V$. Further, for each $i,j$ there exists $g_{ij} \in G$ such that $g_{ij}X_{i} = X_{j}$ and for $i=1, \ldots, l$, the $X_{i}$ form a maximal set of pairwise non-isomorphic conjugates. Also, the action of $G$ on the $Y_{i}$ is transitive. It is not difficult to check that $ C_{F}(Y_{i}) = C_{F}(X_{i}) = K_i$ say. Thus $F/K_i$ acts faithfully on $Y_{i}$ and when its action is restricted to $X_{i}$, it acts faithfully and irreducibly on $X_{i}$. Since, $X_{i}$ is a non-trivial irreducible faithful $\mathbb{F}_{s}F/K_i$-module and $t$ is coprime to $q$ and $r$, we get that $F/K_i$ is cyclic and $\dim_{\mathbb{F}_{s}}(X_i) = d_i$ where $d_i$ is the least positive integer such that $m_i$ divides $s^{d_i} -1$, and where $m_i$ is the order of $F/K_i$. Since the $X_i$ are conjugate we get that $\dim_{\mathbb{F}_{s}}(X_i) = d_i = d$ for all $i$.

Let $E_{i}$ be the subalgebra generated by $F/K_i$ in $\mbox{End}_{F_{s}}(Y_{i})$. Note that $E_{i}$ is commutative as $F/K_i$ is abelian. Further, $X_{i}$ is a faithful irreducible $E_{i}$-module. So, $E_{i}$ is simple and becomes a field such that $E_{i} \cong F_{s^{d}}$. We also observe that $\alpha = k l d$.

Let $k, l, d$ be fixed such that $\alpha = kld$. Now we find the choices for $F$ up to conjugacy in ${\rm GL}(V)$. Clearly,
\begin{align*}
F &\leq F/K_1 \times F/K_2 \times \cdots \times F/K_l \\
&\leq {E_1}^{*} \times {E_2}^{*} \times \cdots \times {E_l}^{*} \\
&\leq {\rm GL}(Y_{1}) \times {\rm GL}(Y_{2}) \times \cdots \times {\rm GL}(Y_{l}) \\
&\leq {\rm GL}(V)   
\end{align*}
where ${E_i}^*$ denotes the multiplicative group of the field $E_i$.
Let $E = {E_1}^{*} \times {E_2}^{*} \times \cdots \times {E_l}^{*}$. Then $|E|= (s^{d} - 1)^{l}$. Regarding $V$ as an $F_{s}E$-module, we get $V = kX_{1} \oplus kX_{2} \oplus \cdots \oplus kX_{l}$, where $E_{i}^{*}$ acts faithfully and irreducibly on $X_{i}$ and $\dim_{E_{i}}(X_{i}) = 1$, for all $i$. Further, for all $i \neq j$, $E_{i}^{*}$ acts trivially on $X_{j}$. It is not difficult to show that  there is only one conjugacy class of subgroups of type $E$ in ${\rm GL}(V)$.

So once $k,l$ and $d$ are chosen such that $\alpha = k l d$, up to conjugacy there is only one choice for $E$. Since $E$ is a direct product of $l$ isomorphic cyclic groups, any subgroup of $E$ can be generated by $l$ elements. In particular, $F$ can be generated by $l$ elements. So, the number of choices for $F$ as a subgroup of $E$ is at most $|E|^{l}= (s^{d} - 1)^{{l}^{2}}$.

Since, $G$ acts transitively on $\{Y_{1},\cdots, Y_{l}\}$, therefore, there exists a homomorphism say $\phi$ from $G$ into $S_{l}$. Let ${N} = \ker(\phi) = \{g \in G \mid gY_{i} = Y_{i}\mbox{ for all } $i$ \}$.  Clearly $F \leq {N}$ and $G/{N}$ is a transitive subgroup of $S_{l}$ which is in $\mathfrak{A}_r$. If $g \in {N}$, then we can show that $gE_{i}g^{-1}=E_{i}$. Thus there exists a homomorphism $\psi_{i}: {N} \rightarrow \mbox{Gal}_{\mathbb{F}_{s}}(E_{i})$. This induces a homomorphism $\psi$ from  $N$  to $\mbox{Gal}_{\mathbb{F}_{s}}(E_{1}) \times \mbox{Gal}_{\mathbb{F}_{s}}(E_{2}) \times \cdots \times \mbox{Gal}_{\mathbb{F}_{s}}(E_{l})$ such that $\ker(\psi) =  \cap_{i=1}^{l} N_{i} = F$ where $N_{i} = \ker(\psi_i) = C_{N}(E_{i})$. So, ${N}/F$ is isomorphic to a subgroup of $\mbox{Gal}_{\mathbb{F}_{s}}(E_{1}) \times \mbox{Gal}_{\mathbb{F}_{s}}(E_{2}) \times \cdots \times \mbox{Gal}_{\mathbb{F}_{s}}(E_{l})$. Since $\mbox{Gal}_{\mathbb{F}_{s}}(E_{i}) \cong C_{d}$, for every $i$ we get that ${N}/F$ can be generated by $l$ elements.  
 
Let $T={\rm GL}(\alpha, s)$. Let $\hat{N}=\{x \in N_{T}{(F)} \mid xY_{i}=Y_{i}, \,\text{for all $i$} \}$. Then $F \leq N \leq \hat{N} \leq N_{T}{(F)}$. We will find the choices for $N$ as a subgroup of $\hat{N}$, given that $F$ has been chosen. The group $\hat{N}$ acts by conjugation on $E_{i}$ and fixes the elements of $\mathbb{F}_s$. So, we have a homomorphism $\rho_{i}: \hat{N} \rightarrow \mbox{Gal}_{\mathbb{F}_{s}}(E_{i})$ with kernel $C_{\hat{N}}{(E_{i})}$. Define $C =  \cap_{i=1}^{l} C_{\hat{N}}{(E_{i})}$. Note that $N \cap C = F$. Also, $\hat{N}/C$ is isomorphic to a subgroup of $\mbox{Gal}_{\mathbb{F}_{s}}(E_{1}) \times \mbox{Gal}_{\mathbb{F}_{s}}(E_{2}) \times \cdots \times \mbox{Gal}_{\mathbb{F}_{s}}(E_{l})$, where each $\mbox{Gal}_{\mathbb{F}_{s}}(E_{i}) $ is isomorphic to $C_{d}$, for every $i$. So, $|\hat{N}/C| \leq d^{l}$. Clearly $C$ centralises $E_{i}$, for each $i$. Therefore, there exists a homomorphism from $C$ into ${\rm GL}_{\mathbb{E}_{i}}{(Y_{i})}$ for each $i$. Hence $C$ is isomorphic to a subgroup of ${\rm GL}_{\mathbb{E}_{i}}(Y_{1}) \times {\rm GL}_{\mathbb{E}_{i}}(Y_{2}) \times \cdots \times {\rm GL}_{\mathbb{E}_{i}}(Y_{l})$. As, $\dim_{\mathbb{E}_{i}}(Y_i) = k$  and $E_{i} \cong F_{s^{d}}$, for all $i$, therefore, $|C| \leq s^{dk^{2}l}$. Hence $|\hat{N}| \leq d^{l}s^{dk^{2}l}$. 

Now $NC/C \cong N/(N \cap C) = N/F$. So we get that $NC/C$ can be generated by $l$ elements since $N/F$ can be generated by $l$ elements. But $|\hat{N}/C| \leq d^{l}$, therefore, the choices for $NC/C$ as a subgroup of $\hat{N}/C$ is at most $d^{l^{2}}$. Once we make a choice for $NC/C$ as a subgroup of $\hat{N}/C$, we choose a set of $l$ generators for $NC/C$. As $N \cap C = F$, we get that $N$ is determined as a subgroup of $\hat{N}$ by $F$ and $l$ other elements that map to the chosen generating set of $NC/C$. We have $|C|$ choices for an element of $\hat{N}$ that maps to any fixed element of $\hat{N}/C$. Thus, there are at most $|C|^{l}$ choices for $N$ as a subgroup of $\hat{N}$ once $NC/C$ has been chosen. So we have at most $d^{l^{2}}(s^{{dk^{2}l}})^{l} = d^{l^{2}}s^{dk^{2}l^{2}}$ choices for $N$ as a subgroup of $\hat{N}$, once $F$ is fixed. 

Now we find the choices for $G$ given that $F$ and $N$ are determined and fixed as a subgroups of $T$ and $\hat{N} \leq T$ respectively. Let $\hat{Y}=\{y \in N_{T}{(F)} \mid y \mbox{ permutes the } Y_i \}$. Then $F \leq G \leq \hat{Y} \leq N_{T}{(F)} \leq {\rm GL}(V)$. Also there exists a homomorphism from $\hat{Y}$ to $S_{l}$ with kernel $\{y \in \hat{Y} \mid yY_{i} = Y_{i}, \mbox{ for all $i$}\} = \hat{N}$. Thus $\hat{Y}/\hat{N}$ may be regarded as a subgroup of $S_{l}$. But $G \cap \hat{N} = N$. Thus $G/N = G/(G \cap \hat{N}) \cong G\hat{N}/\hat{N}$. So $G/N \cong G\hat{N}/\hat{N} \leq \hat{Y}/\hat{N} \leq S_{l}$. Note that $G/N$ is a transitive subgroup of $S_{l}$ which is in \textbf{$\mathfrak{A}_{r}$}. By \cite[Theorem 1]{LMM1998}, there exists a constant $b$ such that  $S_{l}$ has at most $ 2^{bl^{2} / {\sqrt{\log (l)}}}$ transitive subgroups for $l >1$. Hence, the choices for $G\hat{N}/\hat{N}$ as a subgroup of $\hat{Y}/\hat{N}$ is at most $ 2^{bl^{2} / {\sqrt{\log (l)}}}$.

\smallskip
By \cite[Theorem 2]{LMM2000}, there exists a constant $c$ such that any transitive permutation group of finite degree greater than $1$ can be generated by $\lfloor cl/\sqrt{log(l)}\rfloor$. Thus $G\hat{N}/\hat{N}$ can be generated by $\lfloor cl/\sqrt{log(l)}\rfloor$ for $l > 1$. Once a choice for $G\hat{N}/\hat{N}$ is made as a subgroup of $\hat{Y}/\hat{N}$ and $\lfloor cl/\sqrt{log(l)}\rfloor$ generators are chosen for $G\hat{N}/\hat{N}$ in $\hat{Y}/\hat{N}$ then $G$ will be determined as a subgroup of $\hat{Y}$ by $\hat{N}$ and the choices of elements of $\hat{Y}$ that map to the $\lfloor cl/\sqrt{log(l)}\rfloor$ generators chosen for $G\hat{N}/\hat{N}$. So, we have at most $|\hat{N}|^{\lfloor cl/\sqrt{log(l)}\rfloor}$ choices for $G$ as a subgroup of $\hat{Y}$ once a choice of $G\hat{N}/\hat{N}$ in $\hat{Y}/\hat{N}$ is fixed. Hence we have
$$2^{bl^{2} / {\sqrt{\log (l)}}}\, (d^{l}s^{dk^{2}l})^{\lfloor cl/\sqrt{\log(l)}\rfloor} \leq  2^{bl^{2} / {\sqrt{\log (l)}}}\, 
d^{cl^{2}/ \sqrt{\log(l)}}\,  s^{{cdk^{2}l^{2}}/\sqrt{\log(l)}}$$ choices for $G$ as a subgroup of $\hat{Y}$ assuming that choices for $F$ and $N$ have been made. Putting together all the above estimates we get that the 
number of conjugacy classes of subgroups that are maximal amongst irreducible subgroups of ${\rm GL}(\alpha, s)$ that are in $\mathfrak{A}_q\mathfrak{A}_r$ is at most 
\begin{align*}
\sum_{(k,l,d)}^{}  {(s^{d} - 1)^{{l}^{2}} d^{l^{2}} s^{dk^{2}l^{2}} {2^{bl^{2} / {\sqrt{\log (l)}}}} {d^{cl^{2}/ \sqrt{\log(l)}}}  {s^{{cdk^{2}l^{2}}/\sqrt{\log(l)}}}}.
\end{align*}
where $(k,l,d)$ ranges over ordered triples of natural numbers which satisfy $\alpha = k l d$ and $l >1$. We simplify the above expression as follows. Using $\alpha = k l d$ we get
$$
{(s^{d} - 1)^{{l}^{2}}d^{l^{2}}s^{dk^{2}l^{2}}{s^{{cdk^{2}l^{2}}/\sqrt{\log(l)}}}} \leq s^{(3 +c){\alpha}^2} .
$$
Since $x/\sqrt {\log (x)}$ is increasing for $x > e^{1/2}$, we have $l/\sqrt{\log (l)} \leq \alpha/\sqrt{\log (\alpha)}$ for $l \geq 2$. Thus we get that $2^{bl^{2} / {\sqrt{\log (l)}}} d^{cl^{2}/ \sqrt{\log(l)}}  \leq 2^{(b + c) {\alpha}^2/\sqrt{\log (\alpha)}}$. 

\smallskip
There are at most $2^{\frac{5}{6}\log (\alpha) +\log(6)}$ choices for $(k,l,d)$.  Thus we get that there exists constant $b$ and $c$ such that the number of conjugacy classes of subgroups that are maximal amongst irreducible subgroups of ${\rm GL}(\alpha, s)$ that are in $\mathfrak{A}_q\mathfrak{A}_r$ is at most  $ 2^{(b+c) ({\alpha}^2/\sqrt{\log (\alpha)}) + (5/6) \log (\alpha) + \log(6)} \, s^{(3+c){\alpha}^2}$ provided $\alpha > 1$. \end{proof}

Theorem D follows as a corollary to Proposition \ref{GL_conj_sub_AqAr}. The proof is given below.
\newpage 
\noindent {\bf{Proof of Theorem D}}
\begin{proof}
Let $G$ be maximal amongst subgroups of ${\rm GL}(\alpha, s)$ which are also in $\mathfrak{A}_q\mathfrak{A}_r$. As characteristic of $\mathbb{F}_{s} = t$ and $t \nmid |G|$, therefore, by Maschke's theorem, we have that $G \leq \hat{G_{1}}\times \cdots \times \hat{G_{k}} \leq {\rm GL}(\alpha, s)$ where $\hat{G_{i}}$ are maximal among irreducible subgroups of ${\rm GL}(\alpha_{i}, p)$ which are also in $\mathfrak{A}_q\mathfrak{A}_r$, and where $\alpha = \alpha_{1} + \cdots + \alpha_{k}$. By maximality of $G$, we have $G =  \hat{G_{1}}\times \cdots \times \hat{G_{k}}$. 

Further, the conjugacy classes of $\hat{G_{i}} \in {\rm GL}(\alpha_{i}, s)$ determine the conjugacy class of $G \in {\rm GL}(\alpha, s)$. So, the number of conjugacy classes of subgroups that are maximal amongst the subgroups of ${\rm GL}(\alpha, s)$ which are also in $\mathfrak{A}_q\mathfrak{A}_r$ is at most 
$ \sum \prod_{i=1}^{k}  2^{(b+c) ({\alpha_i}^2/\sqrt{\log (\alpha_i)}) + (5/6) \log (\alpha_i) + \log(6)} \, s^{(3+c){\alpha_i}^2}$ by Proposition \ref{GL_conj_sub_AqAr}, and where the sum is over all unordered partitions $\alpha_{1}, \cdots, \alpha_{k}$ of $\alpha$. We assume that if $\alpha_i =1$ for some $i$, then the part of expression corresponding to it in the product is 1. Since $x/\sqrt {\log (x)}$ is increasing for $x > e^{1/2}$, and $\alpha = \alpha_{1} + \cdots + \alpha_{k}$ we get that
$$
\prod_{i=1}^{k}  2^{(b+c) ({\alpha_i}^2/\sqrt{\log (\alpha_i)}) + (5/6) \log (\alpha_i) + \log(6)} \leq 2^{(b+c) ({\alpha}^2/\sqrt{\log (\alpha)}) + (5/6) \alpha \log (\alpha) + \alpha \log(6)}.
$$

It is not difficult to show that the number of unordered partitions of $\alpha$ is at most $2^{\alpha -1}$.  So, the number of conjugacy classes of subgroups that are maximal amongst the subgroups of ${\rm GL}(\alpha, s)$ which are also in $\mathfrak{A}_q\mathfrak{A}_r$ is at most 
$$
2^{(b+c) ({\alpha}^2/\sqrt{\log (\alpha)}) + (5/6) \alpha \log (\alpha) + \alpha (1+ \log(6))} s^{(3+c){\alpha}^2}
$$
provided $\alpha > 1$. \end{proof}

We end this section with the following remark which provides an alternate bound.
\begin{rema}
\label{Thmd} We do not have an estimate for the constants $b$, $c$ occurring in Theorem D. If we use a weaker fact that any subgroup of $S_n$ can be generated by $\lfloor n/2 \rfloor$ elements for all $n \geq 3$, then we get a weaker result for the number of transitive subgroups of $S_n$ that are in $\mathfrak{A}_q\mathfrak{A}_r$, namely that they are at most $6^{n(n-1)/4}\, 2^{(n+2) \log (n)}$. Using this in the proof of Theorem D, we get that the number of conjugacy classes of subgroups that are maximal amongst the subgroups of ${\rm GL}(\alpha, s)$ which are also in $\mathfrak{A}_q\mathfrak{A}_r$ is at most $$s^{5\alpha^{2}} \, 6^{\alpha(\alpha-1)/4} \,2^{\alpha - 1 + (23/6) \alpha \log(\alpha) + \alpha \log(6)}$$ where $t, q$ and $r$ are distinct primes, $s$ is a power of $t$ and $\alpha \in \mathbb{N}$.
\end{rema}

\section{Enumeration of groups in $\mathfrak{A}_p\mathfrak{A}_q \mathfrak{A}_r$}
\label{Enu_pqr}

In this section we prove Theorem A, namely,
$$f_{\mathfrak{S}}(n) \leq p^{6\alpha^{2}} \,2^{\alpha - 1 + (23/6) \alpha \log(\alpha) + \alpha \log(6)}\, (6^{1/2})^{{({\alpha + \gamma})\beta} + {({\alpha + \beta})\gamma} +{\alpha(\alpha-1)/2}}\, n^{\beta + \gamma},$$ where $n=p^{\alpha}q^{\beta}r^{\gamma}$ and $\alpha, \beta, \gamma \in \mathbb{ N}$. We use techniques adapted from \cite{LP1993}, \cite{GV1997} and \cite{GV1999}.

\medskip
\noindent {\bf{Proof of Theorem A}}
\begin{proof}
 Let $G$ be a group of order $n = p^{\alpha}q^{\beta}r^{\gamma}$ in $\mathfrak{A}_p\mathfrak{A}_q\mathfrak{A}_r$. Then $G= P \rtimes H$ where $P$ is the unique Sylow $p$-subgroup of $G$ and $H \in \mathfrak{A}_q\mathfrak{A}_r$. So we can write $H =Q \rtimes R$ where $|Q|=q^{\beta}$ and $|R|=r^{\gamma}$. Let $G_{1}={G}/{O_{p'}{(G)}}$, $G_{2}={G}/{O_{q'}{(G)}}$ and $G_{3}={G}/{O_{r'}{(G)}}$.  Clearly each $G_{i}$ is a soluble $A$-group and $G \leq G_{1} \times G_{2} \times G_{3}$ as a subdirect product. Further, ${O_{p'}{(G_{1})}}=1={O_{q'}{(G_{2})}}={O_{r'}{(G_{3})}}$.

Since $G_{1}={G}/{O_{p'}{(G)}}$ we get that $G_{1} \in \mathfrak{A}_p\mathfrak{A}_q\mathfrak{A}_r$ and if $P_1$ is the Sylow $p$-subgroup of $G_1$ then $P_{1} \cong P$. Thus, $|G_1|=p^{\alpha}q^{\beta_{1}}r^{\gamma_{1}}$ and we can write $G_{1}= P_{1} \rtimes H_{1}$ where $H_{1} \in \mathfrak{A}_q\mathfrak{A}_r$. So $H_{1}= Q_{1} \rtimes R_{1}$ where $Q_{1}\in \mathfrak{A}_q$ and $|Q_1|=q^{\beta_{1}}$, $R_{1}\in \mathfrak{A}_r$ and $|R_1|=r^{\gamma_{1}}$. Further, $H_{1}$ acts faithfully on $P_{1}$. Hence we can regard $H_1 \leq \rm Aut(P_{1}) \cong$ $ {\rm GL}(\alpha, p)$. Let $M_{1}$ be a subgroup that is maximal amongst $p'$-$A$-subgroups of ${\rm GL}(\alpha, p)$ that are also in $\mathfrak{A}_q\mathfrak{A}_r$ and such that $H_{1} \leq M_{1}$. Let $\hat{G_{1}}=P_{1}M_{1}$. The number of conjugacy classes of the $M_{1}$ in ${\rm GL}(\alpha, p)$ is at most 
 $p^{5\alpha^{2}} \, 6^{\alpha(\alpha-1)/4} \,2^{\alpha - 1 + (23/6) \alpha \log(\alpha) + \alpha \log(6)}$ \mbox{by Remark \ref{Thmd}}.

Since $G_{2}={G}/{O_{q'}{(G)}}$ we will get that $G_{2} \in \mathfrak{A}_q\mathfrak{A}_r$ and if $Q_2$ is the Sylow $q$-subgroup of $G_2$ then $Q_{2} \cong Q$. Thus, $|G_2|=q^{\beta}r^{\gamma_{2}}$ and we can write $G_{2}=Q_{2} \rtimes H_{2}$ where $H_{2} \in \mathfrak{A}_r$. So $|H_2|=r^{\gamma_{2}}$. Also $H_2 \leq \rm Aut(Q_{2}) \cong $ ${\rm GL}(\beta, q)$. Let $M_{2}$ be a subgroup that is maximal amongst $q'$-$A$-subgroups of ${\rm GL}(\beta, q)$ that are also in $\mathfrak{A}_r$ and such that $H_{2} \leq M_{2}$. Let $\hat{G_{2}} = Q_{2}M_{2}$. The number of conjugacy classes of the $M_{2}$ in ${\rm GL}(\beta, q)$ is at most $1$ \mbox{by Proposition \ref{GL_conj_sub_r}}.

Since $G_{3}={G}/{O_{r'}{(G)}}$ we will get that $G_{3} \in \mathfrak{A}_r\mathfrak{A}_q$ and if $R_3$ is the Sylow $r$-subgroup of $G_3$ then $R_{3} \cong R$. Thus, $|G_3|=q^{\beta_3}r^{\gamma}$ and we can write $G_{3}=R_{3} \rtimes H_{3}$ where $H_{3} \in \mathfrak{A}_r$. So $|H_3|=q^{\beta_{3}}$. Also $H_3 \leq \rm Aut(R_{3}) \cong$ $ {\rm GL}(\gamma, r)$. Let $M_{3}$ be a subgroup that is maximal amongst $r'$-$A$-subgroups of ${\rm GL}(\gamma, r)$ that are also in $\mathfrak{A}_q$ and such that $H_{3} \leq M_{3}$. Let $\hat{G_{3}} = R_{3}M_{3}$. The number of conjugacy classes of the $M_{3}$ in ${\rm GL}(\gamma, r)$ is at most $1$ \mbox{by Proposition \ref{GL_conj_sub_r}}.

Let $\hat{G} = \hat{G_{1}} \times \hat{G_{2}} \times \hat{G_{3}}$. Then $G \leq \hat{G}$. We know that the choices for $P_{1}, Q_{2}$ and $R_{3}$ is unique, up to isomorphism. We enumerate the possibilities for $\hat{G}$ up to isomorphism and then find the number of subgroups of $\hat{G}$ of order $n$ up to isomorphism. For the former we count the number of $\hat{G_{i}}$ up to isomorphism which depends on the conjugacy class of the $M_{i}$ in $A_{i}$. Hence, the number of choices for $\hat{G}$ up to isomorphism $ = \displaystyle \prod_{i=1}^{3} \text{Number of choices for $\hat{G_{i}}$ up to isomorphism}$.
Now we estimate the choices for $G$ as a subgroup of $\hat{G}$ using a method of `Sylow systems' introduced by Pyber in \cite{LP1993} .

Let $\hat{G}$ be fixed. We now count the number of choices for $G$ as a subgroup of $\hat{G}$. Let ${\cal S} = \{S_{1}, S_{2}, S_{3}\}$ be a Sylow system for $G$ where $S_{1}$ is the Sylow $p$-subgroup of $G$, $S_{2}$  is a Sylow $q$-subgroup of $G$ and $S_{3}$ is a Sylow $r$-subgroup of $G$ such that $S_{i}S_{j} = S_{j}S_{i}$ for all $i, j=1,2,3$. Then $G = S_{1} S_{2} S_{3}$. By \cite[Theorem 6.2, Page-49]{SPG2007}, we know that there exists ${\cal B} = \{B_{1}, B_{2}, B_{3}\}$, a Sylow system for $\hat{G}$ such that $S_{i} \leq B_{i}$ where $B_{1}$ is the Sylow $p$-subgroup of $\hat{G}$, $B_{2}$ is a Sylow $q$-subgroup of $\hat{G}$ and $B_{3}$ is a Sylow $r$-subgroup of $\hat{G}$. Note that $|B_1|=p^{\alpha}$. Further any two Sylow systems for $\hat{G}$ are conjugate. Hence, the number of choices for $G$ as a subgroup of $\hat{G}$ and up to conjugacy is at most
$$\mid \!\{S_1, S_2, S_3 \mid S_{i} \leq B_{i}, |S_1|=p^{\alpha}, |S_2|=q^{\beta}, |S_3|=r^{\gamma}\! \} \mid \,\leq \,  |B_1|^{\alpha} |B_2|^{\beta} |B_3|^{\gamma}.  $$

 We observe that $B_2= T_{21} \times T_{22} \times T_{23}$ where $T_{2i}$ are some Sylow $q$-subgroups of $\hat{G_{i}}$ for $i=1,2,3$. Using \cite[Proposition 3.1]{GV1997}, we get that $|T_{21}| \leq |M_1| \leq (6^{1/2})^{{\alpha}-1} p^{\alpha}$ and $|T_{23}| = |M_3| \leq (6^{1/2})^{{\gamma}-1} r^{\gamma}$. Further $|T_{22}|=|Q_2|=q^{\beta}$. Hence, we get that $|B_2| \leq  (6^{1/2})^{{\alpha + \gamma}-2} p^{\alpha} q^{\beta} r^{\gamma} \leq (6^{1/2})^{{\alpha + \gamma}} n$, and so $|B_2|^{\beta}  \leq  (6^{1/2})^{({\alpha + \gamma})\beta} n^{\beta}$. Similarly we can show that $|B_3| \leq  (6^{1/2})^{{\alpha + \beta}-2} p^{\alpha} q^{\beta} r^{\gamma}$. So $|B_3|^{\gamma}  \leq  (6^{1/2})^{({\alpha + \beta})\gamma} n^{\gamma}$. Now we put all the estimates together to get that the number of choices for $G$ as a subgroup of $\hat{G}$ up to conjugacy is at most $|B_1|^{\alpha} |B_2|^{\beta} |B_3|^{\gamma}$ which is less than or equal to
$$p^{{\alpha}^2}\, (6^{1/2})^{({\alpha + \gamma})\beta} n^{\beta} (6^{1/2})^{({\alpha + \beta})\gamma} n^{\gamma} \leq p^{{\alpha}^2}\, (6^{1/2})^{{({\alpha + \gamma})\beta} + {({\alpha + \beta})\gamma}}n^{\beta + \gamma}.$$
Therefore, the number of groups of order $p^{\alpha}q^{\beta}r^{\gamma}$ in $\mathfrak{A}_p\mathfrak{A}_q\mathfrak{A}_r$ up to isomorphism is 
\begin{eqnarray*}
&\leq& p^{5\alpha^{2}} \, 6^{\alpha(\alpha-1)/4} \,2^{\alpha - 1 + (23/6) \alpha \log(\alpha) + \alpha \log(6)}\,
p^{{\alpha}^2}\, (6^{1/2})^{{({\alpha + \gamma})\beta} + {({\alpha + \beta})\gamma}}\, n^{\beta + \gamma} \\
&=& p^{6\alpha^{2}} \,2^{\alpha - 1 + (23/6) \alpha \log(\alpha) + \alpha \log(6)}\, (6^{1/2})^{{({\alpha + \gamma})\beta} + {({\alpha + \beta})\gamma} +{\alpha(\alpha-1)/2}}\, n^{\beta + \gamma}.
\end{eqnarray*}
\end{proof}

\end{document}